\newtheorem{theorem}{Theorem}[section]
\newtheorem{proposition}[theorem]{Proposition}
\newtheorem{example}[theorem]{Example}
\begin{document}
\title[Rings $K(s)^*(BG)$]{Morava $K(s)^*$-rings of the extensions of $C_p$ by the products of good groups under diagonal action}
\author{Malkhaz Bakuradze}
\address{Iv. Javakhishvili Tbilisi State University, Faculty of Exact and Natural Sciences}
\email{malkhaz.bakuradze@tsu.ge}
\subjclass[2010]{55N20; 55R12; 55R40}
\keywords{Transfer, Morava $K$-theory}


\date{}


\begin{abstract}
This note provides a theorem on good groups in the sense of Hopkins-Kuhn-Ravenel \cite{HKR} and some relevant examples.
\end{abstract}

\maketitle{}


\section{Preliminaries and main result}


Let $K(s)^*(-)$, $s>1$, be the $s$-th Morava $K$-theory at prime $p$.
Note that by \cite{JW}, the coefficient ring $K(s)^*(pt)$ is the Laurent polynomial ring in one variable, which is usually denoted in our situation by $\mathbb{F}_p[v_s,v_s^{-1}]$, where $\mathbb{F}_p$ is the field of $p$ elements and $deg(v_s)=-2(p^s-1)$.

In favourable cases $K(s)^*(BG)$ is generated by classes of geometric origin, i.e.,
Euler classes and transfers of Euler classes. In fact, this happens in most if not all successful
calculations to date \cite{SCH1,SCH2,SCH3,SCH4,SCHY}.

Not so surprising, that the examples of calculations with groups $D$,\,$SD$,\,$QD$,\,$Q$,\,$M$ \cite{BV}, \cite{B1,B2} show, that even if the additive structure of $K(s)^*(BG)$ is calculated, the multiplicative structure is still a delicate task. Just outside of the class of p-groups with maximal cyclic subgroup--already for 2-groups of order 32--one is lead to a complicated ring structures \cite{BJ}.

In a previous article \cite{BP}
the author and S. Priddy obtained formulas relating Chern classes of transfer bundles to
transfers of Chern classes. As formal consequences of such formulas one obtains new relations
(as well as often much simpler derivations of old ones), and the hope generally is that these
combined methods prove sufficient.

The main result of this note is Theorem \ref{1} on good groups in the sense of Hopkins-Kuhn-Ravenel as follows.

\medskip

Recall from \cite{HKR} the following definition.

\medskip

a) For a finite group $G$, an element $x\in K(s)^*(BG)$ is good
if it is a transferred Euler class of a complex subrepresentation of $G$, i.e., a class
of the form $Tr^*(e(\rho))$, where $\rho$ is a complex representation of a subgroup $H<G$, $e(\rho)\in K(s)^*(BH)$ is its Euler class (i.e., its top Chern class, this being defined since $K(s)^*$ is a complex oriented theory), and $Tr:BG \to BH$ is the transfer map.

(b) $G$ is called to be good if $K(s)^*(BG)$ is spanned by good elements as a $K(s)^*$–module.


The good groups in the weaker sense, i.e., $K(s)^{odd}=0$, also play a role in the literature
\cite{K},
\cite{SCH1},
\cite{Y3}.

In \cite{K}, Kriz proved a theorem about the Serre spectral sequence \cite{HKR}
\begin{equation*}
E_2=H^*(BC_p,K(s)^*(BH))\Rightarrow K(s)^*(BG)
\end{equation*}
associated to a group extension $1\to H \xrightarrow[]{}G \xrightarrow[]{} C_p\to 1$ for a $p$-group $G$ and normal subgroup $H\lhd G$ with $K(s)^{odd}(BH)=0$. Namely, in \cite{K} Kriz proved, that $K(s)^{odd}(BG)=0$ if and only if the integral Morava
$K$-theory $\tilde{K}(s)^*(BH)$ is a permutation module for the action of $G/H \cong C_p$.
Kriz in \cite{K} and Yagita in \cite{Y3} proved that an extension of an elementary abelian
$p$-group by a cyclic $p$-group satisfies the even-dimensionality conjecture.
For primes greater than 3, groups of $p$-rank $2$  were shown to have even Morava $K$-
theory by Tezuka-Yagita \cite{TY1} and Yagita \cite{Y2}. Furthermore, Yagita also
proved that these groups are generated by transfered Euler classes and are thus
good in the sense of Hopkins-Kuhn-Ravenel. Hunton \cite{HU} used "unitary-like embeddings" to show that if
$K(s)^*(BH)$ is concentrated in even degrees, then so is $K(s)^*(BH\wr C_p)$. An
independent proof of the same fact in the sense HKR was given in \cite{HKR}; Our result is the following

\begin{theorem}
\label{1}
Let $H_i$ and $G_i$ be finite $p$-group, $i=1,\cdots, n$, such that $H_i$ is good and  $G_i$ fits into an extension $1\to H_i\to G_i\to C_p \to 1$.

Let $G$ fits into an extension of the form $1\to H \to G\to C_p \to 1$, with diagonal conjugation action of $C_p$ on $H=H_1 \times\cdots\times H_n $.  Denote by

$Tr^*=Tr^*_{\varrho}: K(s)^*(BH)\to K(s)^*(BG)$, the transfer homomorphism, associated to $p$-covering $\varrho=\varrho(H,G):BH \to BG$,

$Tr_i^*=Tr^*_{\varrho_i}:K(s)^*(BH_i)\to K(s)^*(BG_i)$, the transfer homomorphism, associated to $p$-covering  $\varrho_i=\varrho(H_i,G_i):BH_i \to BG_i$, $i=1,\cdots ,n$,

$\rho_i:BG\to BG_i $, the map, induced by projection $H \to H_i$ on $i$-th factor,

and let $\rho^*$ be the restriction of
$$(\rho_1,\cdots ,\rho_n)^*:K(s)^*(BG_1\times \cdots \times BG_n)\to K(s)^*(BG)$$
on $K(s)^*(BG_1)/Im Tr^*_1 \otimes \cdots \otimes K(s)^*(BG_n)/ImTr^*_n$.  Then

\medskip

i) If $G_i$ are good so is $G$.

ii) $K(s)^*(BG)$ is spanned, as a $K(s)^*(pt)$-module,
by elements of $ImTr^*$ and $Im\rho^*$

\end{theorem}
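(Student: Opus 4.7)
My first step is to establish that $H=H_1\times\cdots\times H_n$ is good. The external product of transferred Euler classes is itself a transferred Euler class (the external product of Euler classes is the Euler class of the direct-sum representation, and the external product of transfers is the transfer from the product subgroup), so by Künneth $K(s)^*(BH)\cong\bigotimes_i K(s)^*(BH_i)$ is good; in particular $K(s)^{odd}(BH)=0$. The diagonal $C_p$-action on $K(s)^*(BH)$ corresponds to the tensor product of the individual actions on the right.

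Next, the square
$$
\begin{CD}
BH @>\varrho>> BG\\
@V\pi_iVV @VV\rho_iV\\
BH_i @>\varrho_i>> BG_i
\end{CD}
$$
is a pullback of $p$-fold coverings, since $H=\rho_i^{-1}(H_i)$ inside $G$ and both vertical maps are quotient by the same $C_p$. The base-change formula then gives $\rho_i^*\circ Tr_i^*=Tr^*\circ\pi_i^*$, and commutativity of the square gives $\mathrm{res}\circ\rho_i^*=\pi_i^*\circ\mathrm{res}_i$. Together with Frobenius reciprocity, which makes $\mathrm{Im}\,Tr^*$ an ideal, these imply that $\rho^*$ is well-defined on the tensor product of quotients and that $\mathrm{Im}\,\rho^*+\mathrm{Im}\,Tr^*$ is a subring of $K(s)^*(BG)$.

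For (ii), I filter $K(s)^*(BG)$ by Kriz's spectral sequence $E_2^{p,q}=H^p(C_p; K(s)^q(BH))\Rightarrow K(s)^{p+q}(BG)$ and argue by induction on filtration. Writing $N=\sum_{k=0}^{p-1}g^k$ for the norm operator, on the edge I have $\mathrm{res}(\mathrm{Im}\,Tr^*)=N(K(s)^*(BH))$ (since $\mathrm{res}\circ Tr^*=N$) and $\mathrm{res}(\mathrm{Im}\,\rho^*)=\bigotimes_i\mathrm{Im}(\mathrm{res}_i)$. The central combinatorial claim is that
$$\mathrm{res}(K(s)^*(BG))\subset\bigotimes_i\mathrm{Im}(\mathrm{res}_i)+N(K(s)^*(BH)),$$
so that each $x\in K(s)^*(BG)$ is, modulo $\ker(\mathrm{res})=F^{\geq1}$, congruent to an element of $\mathrm{Im}\,\rho^*+\mathrm{Im}\,Tr^*$. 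For the positive filtration I use that the inflation $\mathrm{inf}^*:K(s)^*(BC_p)\to K(s)^*(BG)$ factors as $\rho_i^*\circ\mathrm{inf}_i^*$ for each $i$, so $\mathrm{Im}\,\mathrm{inf}^*\subset\mathrm{Im}\,\rho^*$; by the multiplicative and periodic structure of the SSS, higher-filtration contributions are spanned by products of inflation classes with edge classes, hence lie in $\mathrm{Im}\,\rho^*+\mathrm{Im}\,Tr^*$. The induction closes (ii).

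For (i), with each $G_i$ good, choose representatives of $a_i\in K(s)^*(BG_i)/\mathrm{Im}\,Tr_i^*$ that are transferred Euler classes $Tr_{K_i<G_i}^*e(\tau_i)$. The base-change formula together with multiplicativity of external products of Euler classes collapses each element of $\mathrm{Im}\,\rho^*$ to a single transferred Euler class of $K_1\times\cdots\times K_n<G$, and transitivity of transfer identifies $\mathrm{Im}\,Tr^*$ with transferred Euler classes from subgroups of $G$ (since $H$ is good). Combined with (ii) this proves (i). The main technical obstacle is the displayed restriction inclusion in the previous paragraph: it rests on a Künneth-like decomposition of diagonal $C_p$-invariants in $\bigotimes_i K(s)^*(BH_i)$ modulo norms, which uses the cyclicity of the quotient in an essential way.
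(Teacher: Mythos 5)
Your overall route coincides with the paper's: Künneth for $BH$, the spectral sequence of $1\to H\to G\to C_p\to 1$, Frobenius reciprocity identifying $\mathrm{res}\circ Tr^*$ with the norm $N$, the base-change identity $\rho_i^*\circ Tr_i^*=Tr^*\circ\pi_i^*$, and the module structure over $K(s)^*(BC_p)$ to dispose of positive filtration. Those parts are fine. The problem is that the one step you explicitly defer, the inclusion
$$\mathrm{res}\bigl(K(s)^*(BG)\bigr)\subset\bigotimes_i\mathrm{Im}(\mathrm{res}_i)+N\bigl(K(s)^*(BH)\bigr),$$
is where essentially all of the content lies, and the reason you offer for it --- ``cyclicity of the quotient'' --- is not sufficient. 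For general $C_p$-modules over $\mathbb{F}_p$ the analogous statement is false: take $M=N=J_2=\mathbb{F}_p[t]/(t-1)^2$, the two-dimensional Jordan block with basis $e_1,e_2$, $te_2=e_2+e_1$. Then $e_1\otimes e_2-e_2\otimes e_1$ is $C_p$-invariant, is not a norm, and is not in the image of $M^{C_p}\otimes N^{C_p}=\langle e_1\otimes e_1\rangle$. So invariants of a diagonal tensor product are \emph{not} in general generated by norms together with products of invariants; the claim genuinely requires each $K(s)^*(BH_i)$ to be a permutation $C_p$-module, i.e.\ a direct sum of a free and a trivial module.

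That extra input is exactly what the paper's proof supplies and yours omits: writing $K(s)^*(BH_i)=F_i\oplus T_i$ (free $\oplus$ trivial) and using that free $\otimes$ anything is free while trivial $\otimes$ trivial is trivial, one gets $K(s)^*(BH)=F\oplus(T_1\otimes T_2)$, so the invariants are $N(F)+T_1\otimes T_2$; the norm part is $\mathrm{res}(\mathrm{Im}\,Tr^*)$ by Frobenius reciprocity, and $T_1\otimes T_2$ is covered by $\mathrm{res}_1\otimes\mathrm{res}_2$ of classes (good classes, for part i)) coming from $G_1\times\cdots\times G_n$ via $(\rho_1,\dots,\rho_n)$. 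To repair your argument you need (a) the free-plus-trivial decomposition of each $K(s)^*(BH_i)$ as a $C_p$-module --- this is where the goodness hypotheses actually do work, via Kriz's permutation-module theorem or an explicit basis of transferred Euler classes --- and (b) the fact that the trivial summand $T_i$ lies in $\mathrm{Im}(\mathrm{res}_i)$, i.e.\ consists of permanent cycles for the extension $1\to H_i\to G_i\to C_p\to 1$. As written, the proposal has a genuine gap at its decisive step.
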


\begin{proof} Clearly it suffices to consider the case $n=2$. Also ii) implies i). The proof of ii) uses formal properties of the transfer \cite{A}, \cite{KP}, \cite{D}, including the Fr\"{o}benius reciprocity and the
double coset formula. We follow the proof of wreath product theorem (\cite{HKR}, Theorem 3.8).

Consider the spectral sequence $\{E_r^{*,*}(BG)\}$ with
\begin{equation*}
E_2^{*,*}=H^*(BC_p,K(s)^*(BH))\Rightarrow K(s)^*(BG)
\end{equation*}
associated to the group extension
$$1\to H \xrightarrow[]{}G \xrightarrow[]{\pi} C_p\to 1.$$

As $H_i$ are good, we have that the Kunneth isomorphism
$$K(s^*(BH)=K(s)^*(BH_1)\otimes K(s)^*(BH_2),$$
is the $C_p$-module map, induced by diagonal action of $C_p$ on $H_1\times H_2$.

This gives the decompositions
$$[K(s)^*(BH)]^{C_p}=[F]^{C_p}+T, \text{ and }\,\,\,
[H_k]^{C_p}=[F_k]^{C_p}+T_k,\,\,\,k=1,\cdots, n$$
corresponding to the decompositions of  $K(s)^*(BH)$ and $K(s)^*(BH_k)$ into free and trivial $C_p$-modules. Note that
$T=T_1\otimes T_2.$

Here

\begin{equation*}
H^i(BC_p,F)=
\begin{cases}
[F]^{C_p}& \text{for $i=0$} \\
0        & \text{for $i>0$}.
\end{cases}
\end{equation*}

and
$$
H^*(BC_p,T)=H^*(BC_p)\otimes T.
$$

Recall $E_2^{0,*}$ is isomorphic to $[K(s)^*(BH)]^{C_p}$ via $\varrho^*: K(s)^*(BG) \to K(s)^*(BH)$. Also, via $\pi^*$, the spectral sequence $E_2^{*,*}(BG)$ is a module over the Atiyah-Hirzebruch spectral sequence $E_2^{*,*}(BC_p)$ that converges to $K(s)^*(BC_p)$.

The Fr\"{o}benius reciprocity of the transfer says that the composition $\varrho^*Tr^*$ is the trace map $\varrho^*Tr^*=1+t+t^2+\cdots +t^{p-1}$, where $t$ is the generator $t\in G/H\cong C_p$.

Clearly, the trace map $\varrho^*Tr^*$ always maps the good elements of $K(s)^*(BH)$ onto $[F]^{C_p}$.
Hence the group $G$ is good iff $T$ is also covered by $\varrho^*$-images of good elements. Any element $a\in T$ is a couple $a_1\otimes a_2$, where
$a_k\in T_k$. Let $G_k$ be good $i=1,2$. There is $b_k$, a sum of good elements in $K(s)^*(BG_k)/ImTr^*_k$, such that $a_k=\varrho_i^*(b_k)$. It suffices to prove that
$a$ is $\varrho^*$ image of a sum of good elements. To see this note that by formal properties of the transfer the product of good elements $d_1\otimes d_2$ is good (see \cite{HKR}, Lemma 7.2 v)), and its pullback under the homomorphism induced by $(\rho_1,\rho_2):G \to G_1\times G_2$ is a sum of good elements (see \cite{HKR},  Lemma 7.2 iv)). Now $G$ is good by \cite{HKR} Lemma 7.6 as $a=\pi^*\circ(\rho_1,\rho_2)^*(b_1 \otimes b_2)$ and therefore every element of $E_2^{0,*}(BG)$ is a permanent cycle.

\end{proof}

Note that $K(s)^{odd}(BG)=0$ does not implies that $G$ is good in the sense of Hopkins-Kuhn-Ravenel, so that the theorem by Kriz, mentioned above, does not implies Theorem \ref{1} i).



\section{examples}

\begin{example}
\end{example}

As an application recall from  \cite{G-P}, there exist 17 non-isomorphic groups of order $2^{2n+1}$, $n>2$, which can be presented as a semidirect product
$(C_{2^{n}}\times C_{2^{n}})\rtimes C_2$.
Each such group G is given by three generators $\mathbf{a}, \mathbf{b}, \mathbf{c}$ and the defining relations
$\mathbf{a}^{2^{n}}=\mathbf{b}^{2^{n}}=\mathbf{c}^2 = 1$, $\mathbf{ab} = \mathbf{ba}$, $\mathbf{c}^{-1}\mathbf{ac} =\mathbf{a}^i\mathbf{b}^j$, $\mathbf{c}^{-1}\mathbf{bc} =\mathbf{a}^k\mathbf{b}^l$
for some $i, j, k, l \in Z_{2^{n}}$ ($Z_{2^m}$ denotes the ring of residue classes modulo $2^m$).
These groups with diagonal actions are good by Theorem \ref{1}. Moreover taking into account the ring structure of $K(s)^*(BG_k)$, \cite{BV,B1}, where $G_k$ is either, the dihedral, semi-dihedral or quasi-dihedral group, one can read off the generators of $K(s)^*(BG)$ as $K(s)^*(pt)$-algebra.
In particular, $K(s)^*(BH)=K(s)^*[u,v]/(u^{2^{ns}},v^{2^{ns}})$, therefore as a $K(s)^*(pt)$ algebra, $K(s)^*(BG)$ is generated by $Tr^*(u)$, $Tr^*(v)$, $Tr^*(uv)$ and the Euler classes which come from $BG_i$.

\begin{example}
\end{example}

 Let us consider in more details the following example and compare it with examples in \cite{BJ}. It seems that even if the action of $G/H\cong C_2$ is diagonal, i.e., is simpler, the ring structure of $K(s)^*(BG)$ has the same complexity.

The group $G=G_{36}$, with number 36 in the Hall-Senior list \cite{H} of 51 groups of order 32, can be presented as
\begin{equation*}
G_{36}={\langle \mathbf{a},\mathbf{b},\mathbf{c} \mid  \mathbf{a}^4=\mathbf{b}^4=\mathbf{c}^2=[\mathbf{b},\mathbf{c}]=1, \mathbf{a}^{-1}\mathbf{b}\mathbf{a}=\mathbf{b}^{-1}, \mathbf{c}\mathbf{a}\mathbf{c}=\mathbf{a}^{-1} \rangle }.
\end{equation*}

$G_{36}$ contains the maximal abelian subgroup
$H=\langle \mathbf{b},\mathbf{a}^2,\mathbf{c}\rangle \cong C_4 \times C_2\times C_2$.

\medskip

Let $\lambda$, $\mu$ and $\nu$ denote the following complex line bundles over $BH$.
$$\lambda(\mathbf{b})=i, \nu(\mathbf{a}^2)=\mu(\mathbf{c})=-1, \lambda(\mathbf{a}^2)=\lambda(\mathbf{c})=\nu(\mathbf{b})=\nu(\mathbf{c})=\mu(\mathbf{b})=\mu(\mathbf{a}^2)=1.$$

The quotient of $G$ by the centre $Z\cong C_2^2$ is isomorphic to $C_2^3$. The projections on the three factors induce three line bundles.
$\alpha$, $\beta$ and $\gamma$:
$$
\alpha(b)=\beta(c)=\gamma(a)=-1,\,\,\,\alpha(a)=\alpha(c)=\beta(a)=\beta(b)=\gamma(b)=\gamma(c)=1.
$$
Let us denote Chern classes by
$$
x_i=c_i(Ind_H^G(\nu)); \,\,\,y_i=c_i(Ind_H^G)(\lambda),\,\,\,a=c_1(\alpha), \,\,\,b=c_1(\beta),\,\,\,c=c_1(\gamma).
$$

From the definition one reads off that (see \cite{SCH1}) $K(s)^*(BH)=M\otimes N$, where $M=K(s)^*(BC_4)$ and $N=K(s)^*(BC_2\times C_2)$, with the switch action. Both actions define the dihedral group $D_8$, written as $C_4\rtimes C_2$, or $(C_2\times C_2) \rtimes C_2$.

In notations of Theorem \eqref{1} $T_1$ and $T_2$ are generated by $\pi^*$-images of Euler classes respectively $\{y_2^i,by_2^i\}$ and $\{x_2^i,ax_2^i\}$,
$0\leq i\leq  2^{s-1}-1$ \cite{BV}. Thus $T$ is covered by Euler classes and $G$ is good.

\medskip

Let
$$
T=Tr^*(uv),\text{ where } u=c_1(\nu);\,\,\, v=c_1(\lambda).
$$

Then by Fr\"{o}benius reciprocity of the transfer any element in the image of $Tr^*$ can be written in $x_1,y_1,x_2,y_2,T,c$
and one has

\begin{proposition}
\label{G36}
i) $K(s)^*(BG)\cong K(s)^*[a,b,c,x_2,y_2,T]/R$, where the ideal $R$ is generated by

$a^{2^s}$, $b^{2^s}$, $c^{2^s}$,

$c(c+x_1+v_s\sum_{i=1}^{s-1}c^{2^s-2^i}x_2^{2^{i-1}})$,
\,\,$c(c+y_1+v_s\sum_{i=1}^{s-1}c^{2^s-2^i}y_2^{2^{i-1}})$,

$a(a+y_1+v_s\sum_{i=1}^{s-1}a^{2^s-2^i}y_2^{2^{i-1}})$,\,\,\,
$b(b+x_1+v_s\sum_{i=1}^{s-1}b^{2^s-2^i}x_2^{2^{i-1}})$,

$(c+y_1+v_s\sum_{i=1}^{s-1}c^{2^s-2^i}y_2^{2^{i-1}})(b+x_1+v_s\sum_{i=1}^{s-1}b^{2^s-2^i}x_2^{2^{i-1}})+v_sb^{2^s-1}T$,

$(c+x_1+v_s\sum_{i=1}^{s-1}c^{2^s-2^i}x_2^{2^{i-1}})(a+y_1+v_s\sum_{i=1}^{s-1}a^{2^s-2^i}y_2^{2^{i-1}})+v_sa^{2^s-1}T$,

$T^2+Tx_1y_1+x_2y_1(c+y_1+v_s\sum_{i=1}^{s-1}c^{2^s-2^i}y_2^{2^{i-1}})+x_1y_2(c+x_1+v_s\sum_{i=1}^{s-1}c^{2^s-2^i}x_2^{2^{i-1}})$,

$T(b+x_1+v_s\sum_{i=1}^{s-1}b^{2^s-2^i}x_2^{2^{i-1}})+v_sb^{2^s-1}x_2(c+y_1)$,

$T(a+y_1+v_s\sum_{i=1}^{s-1}a^{2^s-2^i}y_2^{2^{i-1}})+v_sa^{2^s-1}y_2(c+x_1)$,\,\,\,$cT$,

$v_s^2x_2^{2^s}+c^2+bc$, \,\,\, $v_s^2y_2^{2^s}+a^2+ac,$

\medskip

where

$x_1=v_s(x_2+v_sx_1x_2^{2^{s-1}})^{2^{s-1}}+b$, \,\,\, $y_1=v_s(y_2+v_sy_1y_2^{2^{s-1}})^{2^{s-1}}+c$.

\medskip

ii) Some other relations are

$a^2c=ac^2,\,\,\,$ $b^2c=bc^2,\,\,\,$ $x_1^{2^{s}}=b^{2^{s-1}}c^{2^{s-1}},\,\,\,$ $y_1^{2^{s}}=a^{2^{s-1}}c^{2^{s-1}}.$
\end{proposition}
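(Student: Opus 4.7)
The plan is to apply Theorem \ref{1} to pin down the generators and then derive each relation from the known ring structure of $K(s)^*(BD_8)$ together with Fr\"obenius reciprocity, the double coset formula, and a final dimension count.

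\emph{Setup and generators.} Decompose $H = H_1 \times H_2$ with $H_1 = \langle \mathbf{b}\rangle \cong C_4$ and $H_2 = \langle \mathbf{a}^2, \mathbf{c}\rangle \cong C_2 \times C_2$; the class of $\mathbf{a}$ generates $G/H \cong C_2$ and its conjugation action is diagonal --- inversion on $H_1$, and on $H_2$ the nontrivial automorphism fixing $\mathbf{a}^2$ and sending $\mathbf{c} \mapsto \mathbf{a}^2 \mathbf{c}$. Hence $G_1, G_2 \cong D_8$ and the abelian $H_i$ are good, so Theorem \ref{1}(ii) gives that $K(s)^*(BG)$ is spanned over $K(s)^*(pt)$ by $\operatorname{Im} Tr^*$ and $(\rho_1,\rho_2)^*\bigl(K(s)^*(BG_1)/\operatorname{Im} Tr_1^* \otimes K(s)^*(BG_2)/\operatorname{Im} Tr_2^*\bigr)$. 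The description of $K(s)^*(BD_8)$ from \cite{BV,B1} shows the second summand is generated by the pullbacks $a, b, c, x_2, y_2$, while Fr\"obenius reciprocity reduces $\operatorname{Im} Tr^*$ to the single new generator $T = Tr^*(uv)$ once $Tr^*(u) = x_1$ and $Tr^*(v) = y_1$ are expressed via the displayed formal-group recursions.

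\emph{Relations.} The order relations $a^{2^s} = b^{2^s} = c^{2^s} = 0$ follow since $\alpha, \beta, \gamma$ factor through $BC_2$. The four Euler-class-annihilation relations $c(c+x_1+\cdots)$, $c(c+y_1+\cdots)$, $a(a+y_1+\cdots)$, $b(b+x_1+\cdots)$ and the Chern-class identities $v_s^2 x_2^{2^s} + c^2 + bc$, $v_s^2 y_2^{2^s} + a^2 + ac$ are the $\rho_i^*$-pullbacks of the defining $D_8$-relations from \cite{BV,B1}. The transfer-product relations come from Fr\"obenius reciprocity $T \cdot \xi = Tr^*(uv \cdot \varrho^*\xi)$: taking $\xi$ to be the Euler class $b + x_1 + \cdots$ (whose $\varrho^*$-image is $v$) yields $T(b+x_1+\cdots) + v_s b^{2^s-1} x_2(c + y_1) = 0$ after reducing $uv \cdot v \in K(s)^*(BH)$; symmetrically for the $a$-twin; and $cT = 0$ since $\varrho^*c = 0$. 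The mixed product $(c+y_1+\cdots)(b+x_1+\cdots)$ has $\varrho^*$-image $uv$, so it differs from $T$ by a transferred correction computed via the double coset formula, yielding the stated relation. Finally $T^2 = Tr^*\bigl(uv \cdot \varrho^*T\bigr) = Tr^*\bigl(uv(1 + t^*)(uv)\bigr)$ with $t$ the generator of $C_2$, and expanding $t^*(u), t^*(v)$ via the formal group law and reducing in $K(s)^*(BH)$ produces the quadratic $T^2$-relation.

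\emph{Completeness and part (ii).} By Theorem \ref{1}(i) applied to $G$, the group is good, so by HKR $\dim_{K(s)^*(pt)} K(s)^*(BG) = |G| = 32$; exhibiting a monomial basis of the candidate quotient and checking that its cardinality is $32$ then confirms that the listed relations generate the full ideal. Part (ii) is algebraic: multiplying the relation $v_s^2 y_2^{2^s} + a^2 + ac$ by $c$ together with a $y_2 c$-annihilation (derivable from $cT = 0$ and the $y_1$-recursion) gives $a^2 c = ac^2$, and symmetrically $b^2 c = bc^2$; iterating the $x_1, y_1$-recursions yields the $2^s$-power formulas. The main obstacle I anticipate is the $T^2$-relation, which demands careful formal-group-law bookkeeping of the involution $t^*$ acting on $u, v \in K(s)^*(BH)$, together with the dimension-count verification that the listed relations already generate the full relation ideal rather than merely a consistent subset.
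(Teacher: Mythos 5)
Your overall strategy --- generators from Theorem \ref{1} applied to the splitting $H=\langle \mathbf{b}\rangle\times\langle \mathbf{a}^2,\mathbf{c}\rangle$ with $G_1\cong G_2\cong D_8$, relations from the known $D_8$ rings together with Fr\"obenius reciprocity and the double coset formula, completeness by a rank count --- is exactly the route the paper intends (it proves generation via Theorem \ref{1} and defers (i) and (iii) to the arguments of \cite{BJ}, which are of precisely this type). Your identification of the action, the derivation of $cT=0$ from $\varrho^*(c)=0$, and the Frobenius-reciprocity expansion $T^2=Tr^*\bigl((1+t^*)(uv)\cdot uv\bigr)$ are all sound.

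However, there is a concrete error at the decisive step. You assert that, $G$ being good, ``by HKR $\dim_{K(s)^*(pt)}K(s)^*(BG)=|G|=32$.'' This is false: the HKR Euler characteristic of a good group is the number of $G$-conjugacy classes of commuting $s$-tuples of elements of $p$-power order, not the order of $G$. Already $K(s)^*(BC_2)\cong K(s)^*[x]/(x^{2^s})$ has rank $2^s$, not $2$, and for $G=G_{36}$ the rank is a polynomial in $2^s$ (growing roughly like $2^{3s}$, governed by the maximal abelian subgroup $C_4\times C_2\times C_2$), consistent with the relations $a^{2^s}=b^{2^s}=c^{2^s}=0$ and $x_2^{2^s}\sim c^2+bc$, $y_2^{2^s}\sim a^2+ac$ in the presentation, which force a basis of monomials whose number depends on $s$. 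Since the completeness of the relation ideal $R$ is verified precisely by matching the rank of the candidate quotient against this invariant, targeting $32$ would make the check fail for every $s$; you must instead compute the number of conjugacy classes of commuting $s$-tuples in $G_{36}$ (as is done in \cite{BJ} for the neighbouring groups of order $32$) and exhibit a monomial basis of $K(s)^*[a,b,c,x_2,y_2,T]/R$ of that cardinality. Until the correct target rank is in place, step (iii) of your argument is not merely incomplete but based on a false premise.
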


To prove that Proposition \ref{G36} indeed gives the ring structure we should (i) check the relations stated, and then to establish two facts:
(ii) the classes defined generate, and (iii) the list of relations is complete.

The proof of ii) uses Theorem \ref{1};  i) and iii) uses the arguments worked out in \cite{BJ} and is left to the reader.


\bibliographystyle{amsplain}

\end{document}